\def\N{\mathbbm N}
\def\Z{\mathbbm Z}
\def\Q{\mathbbm Q}
\def\Pf{\operatorname{Pf}}
\def\defterm#1{{\emph{#1}}}
\def\sgn{\operatorname{sgn}}
\renewcommand{\le}{\leqslant}\renewcommand{\leq}{\leqslant}
\renewcommand{\geq}{\geqslant}
\newtheorem{theorem}{Theorem}
\newtheorem{proposition}[theorem]{Proposition}
\newtheorem{lemma}[theorem]{Lemma}
\newtheorem{conjecture}[theorem]{Conjecture}
\newtheorem{corollary}[theorem]{Corollary}
\begin{document}
%
\conferenceinfo{ISSAC'12, July 22--25, 2012,}{Grenoble, France}

\title{Zeilberger's Holonomic Ansatz for Pfaffians}
\numberofauthors{2}
\author{
\alignauthor
Masao Ishikawa\titlenote{
  Partially supported by the Ministry of Education, Science, Sports and Culture of Japan, 
  Grant-in-Aid for Scientific Research (C), 21540015.}\\
  \affaddr{Department of Mathematics}\\
  \affaddr{University of the Ryukyus}\\
  \affaddr{Nishihara, Okinawa 901-0213, Japan}\\
  \email{ishikawa@edu.u-ryukyu.ac.jp}
\alignauthor
Christoph Koutschan\titlenote{
  Supported by the Austrian Science Fund (FWF): P20162-N18.}\\
  \affaddr{MSR-INRIA Joint Centre}\\
  \affaddr{INRIA-Saclay}\\
  \affaddr{91893 Orsay Cedex, France}\\
  \email{koutschan@risc.jku.at}
}
\date{\today}

\maketitle

\begin{abstract}
A variation of Zeilberger's holonomic ansatz for symbolic determinant
evaluations is proposed which is tailored to deal with Pfaffians.  The
method is also applicable to determinants of skew-symmetric matrices,
for which the original approach does not work. As Zeilberger's
approach is based on the Laplace expansion (cofactor expansion) of the
determinant, we derive our approach from the cofactor expansion of the
Pfaffian.  To demonstrate the power of our method, we prove, using
computer algebra algorithms, some conjectures proposed in the paper
``Pfaffian decomposition and a Pfaffian analogue of $q$-Catalan Hankel
determinants'' by Ishikawa, Tagawa, and Zeng. A minor summation
formula related to partitions and Motzkin paths follows as a
corollary.
\end{abstract}

\category{G.2.1}{Discrete Mathematics}{Combinatorics}[Recurrences and difference equations]
\category{G.4}{Mathematical Software}{Algorithm design and analysis}

\terms{Algorithms, Theory}

\keywords{Pfaffian, determinant, minor, holonomic systems approach, 
WZ theory, symbolic summation, computer proof, Motzkin number}

%
%
\section{Introduction}
Pfaffians are a very important concept in combinatorics and in
physics, for example, for the enumeration of plane partitions,
Kasteleyn's method for the dimer models, etc.  We introduce an
algorithmic method for evaluating Pfaffians which allows us to solve
such problems automatically by computer; we demonstrate its
applicability by proving a few conjectures
in~\cite{IshikawaTagawaZeng10}, concerning Pfaffians of interesting
combinatorial numbers.  Our approach is a variation of Zeilberger's
holonomic ansatz for evaluating determinants, which we recall
in the following, for sake of self-containedness.

In~\cite{Zeilberger07}, Zeilberger proposed an algorithmic approach
for evaluating and/or producing rigorous proofs of determinant
evaluations of the form
\[
  \det\,(\check a_{i,j})_{1\leq i,j\leq n} =\check  b_n
\]
(we use checked letters here to avoid confusion with the quantities
introduced in Section~\ref{sec.pfaffians}).  The goal is achieved in a
completely automatic fashion, using computer algebra algorithms for
guessing recurrences and symbolic summation.
The key point is to \emph{guess}~\cite{Kauers09} a suitable (implicit)
description of an auxiliary function~$\check c_{n,j}$ and then prove
that it satisfies the three identities
\begin{alignat}{2}
\check c_{n,n} & = 1 && (n\geq 1),
  \tag{$\check 1$}\label{eq:cofactor1}\\
\sum^{n}_{j=1}\check c_{n,j} \check a_{i,j} & = 0 && (1\leq i < n),
  \tag{$\check 2$}\label{eq:cofactor2}\\
\sum^{n}_{j=1}\check c_{n,j} \check a_{n,j} & = \frac{\check b_n}{\check b_{n-1}} &\qquad& (n\geq 1).
  \tag{$\check 3$}\label{eq:cofactor3}
\end{alignat}
The determinant evaluation follows as a consequence, using Laplace
expansion w.r.t. the last row and induction on~$n$.

In principle, the approach is applicable if the matrix is never
singular, i.e., if $\check b_n\neq0$ for all $n\geq0$. But in order to
turn Identities~\eqref{eq:cofactor1}--\eqref{eq:cofactor3} into
routinely provable tasks, Zeilberger additionally requires that the
matrix entries~$\check a_{i,j}$ constitute a bivariate holonomic
sequence and that the ratios of two consecutive determinants $\check
b_n/\check b_{n-1}$ form a univariate holonomic (P-finite) sequence
(in other words, $\check b_n$ is required to be what one could call
\emph{hyper-holonomic}). This is the reason why he termed his approach
the \emph{holonomic ansatz}~\cite{Zeilberger07,Zeilberger90}.  But
still, even if all these conditions are satisfied, Zeilberger's
holonomic ansatz is not guaranteed to succeed, because it relies on
the fact that the auxiliary function~$\check c_{n,j}$ (that is, the
cofactors of the Laplace expansion with respect to the last row of the
$n\times n$ matrix, divided by the determinant of the
$(n-1)\times(n-1)$ matrix) turns out to be holonomic, too. This may be
the case or not. If one is lucky, i.e., if $\check c_{n,j}$ satisfies
sufficiently many linear recurrence equations with polynomial
coefficients and therefore is holonomic, then the holonomic machinery
will produce a P-finite recurrence for the sum on the left-hand side
of~\eqref{eq:cofactor3}.  Such a recurrence can then be used to prove
a (conjectured) determinant evaluation~$\check b_n$ by substituting
the ratio~$\check b_n/\check b_{n-1}$ into this recurrence and
comparing initial values, or even, if the recurrence is not too
complicated, to solve it explicitly and obtain a closed form for the
determinant.

For a more detailed description and justification of the holonomic
ansatz, see~\cite{Zeilberger07,KoutschanKauersZeilberger10}. We also
recommend the beautiful essay~\cite{Krattenthaler99} for the reader
who is interested in determinant and Pfaffian evaluations in general.

In the present paper, we introduce a variation of Zeilberger's method
that is tailored particularly for Pfaffians. Recall that Pfaffians
are defined only for skew-symmetric matrices and that the square of
the Pfaffian equals the determinant. As a trivial consequence, our
approach addresses determinants of skew-symmetric matrices as well.
Clearly Zeilberger's holonomic ansatz cannot be applied to
skew-symmetric matrices, since the determinant in this case vanishes
whenever the dimension is odd.  Another extension of Zeilberger's
ansatz, the so-called \emph{double-step method}, is applicable to
matrices that are zero either for even or odd
dimensions~\cite{KoutschanThanatipanonda12}.  Concerning the
evaluation of determinants only (not Pfaffians), the double-step
method is more general, as it does not assume skew-symmetry, but at
the same time much more complicated and less efficient than our
approach for Pfaffians.

In the next section, we state the cofactor expansion of the Pfaffian
and use it to develop our algorithmic approach for dealing with
evaluations of Pfaffians; this means proof and/or discovery, as in
Zeilberger's approach for determinants.  In the following
Sections~\ref{sec.Motz}--\ref{sec.Nara} this method is used to solve
some open problems posed in~\cite{IshikawaTagawaZeng10}. The details
of our computer proofs are provided as supplementary electronic
material on the webpage
\begin{center}
\texttt{http://www.risc.jku.at/people/ckoutsch/pfaffians/}
\end{center}
in form of a Mathematica notebook. It is supposed to enable the reader
to reproduce our results and do further experiments.  In
Section~\ref{sec.Appl} we use our results (Theorem~\ref{thm.pfMotz})
to prove an interesting minor summation formula where the sum ranges
over certain partitions and the matrix entries are variations of
Motzkin numbers. We conclude this article by posing some open problems
as future challenges.

%
\section{Pfaffians}\label{sec.pfaffians}
Let $n$ be a positive integer and let 
 $A=(a_{i,j})_{1\le i,j\le 2n}$ be 
a $2n$ by $2n$ skew-symmetric matrix, i.e., $a_{j,i}=-a_{i,j}$, 
whose entries $a_{i,j}$ are in a commutative ring.
Note that it is completely determined by its upper 
triangular entries~$a_{i,j}$ for $1\leq i<j\leq 2n$.
The \defterm{Pfaffian} $\Pf(A)$ of $A$ is defined by
\begin{equation*}
\Pf(A)=\sum \epsilon(\sigma_{1},\sigma_{2},\hdots,\sigma_{2n-1},\sigma_{2n})\,
a_{\sigma_{1}\sigma_{2}} \dots a_{\sigma_{2n-1}\sigma_{2n}}.
\end{equation*}
where the summation is over all partitions 
\[
  \{\{\sigma_{1},\sigma_{2}\},\hdots,\{\sigma_{2n-1},\sigma_{2n}\}\}
\]
of $[2n]=\{1,2,\dots,2n\}$ into two-elements subsets,
and where $\epsilon(\sigma_{1},\sigma_{2},\hdots,\sigma_{2n-1},\sigma_{2n})$ 
denotes the sign of the permutation
\begin{equation*}
\begin{pmatrix}
1&2&\cdots&2n-1&2n\\
\sigma_{1}&\sigma_{2}&\cdots&\sigma_{2n-1}&\sigma_{2n}
\end{pmatrix}.
\end{equation*}
A permutation $(\sigma_{1},\sigma_{2},\hdots,\sigma_{2n-1},\sigma_{2n})$ 
which arises from a partition of $[2n]$ into $2$-elements blocks is called a \defterm{perfect matching} or a \defterm{$1$-factor}.
For any permutation $\pi$ of $[2n]$, let $A^{\pi}=(a_{\pi(i)\pi(j)})$
denote the skew-symmetric matrix obtained by the natural action of $\pi$ on both rows and columns.
From the  definition above it is easy to see that 
\begin{equation*}
\Pf(A^{\pi})=\sgn\pi\,\Pf(A).
\end{equation*}
Hence, if any two rows and/or columns are coinciding in~$A$,
the Pfaffian $\Pf(A)$ of~$A$ vanishes. It is a well-known fact
that $\Pf(A)^2=\det(A)$.
Now let $I=\{i_1,\dots,i_{r}\}$ be an $r$-element subset of $[2n]$:
we denote by 
\[
  A(I)=A(i_1,\dots,i_{r})
\]
the skew-symmetric $(2n-r)\times (2n-r)$ matrix obtained from~$A$ by removing
the rows $i_1,\dots,i_r$ and the columns $i_1,\dots,i_r$.
Also let us define $\Gamma_{i,j}$ for $1\leq i,j\leq 2n$ by
\[
\Gamma_{i,j}=\begin{cases}
(-1)^{j-i-1}\Pf A(i,j)
&\text{ if $i<j$,}\\
(-1)^{i-j}\Pf A(j,i)
&\text{ if $j<i$,}\\
0
&\text{ if $i=j$.}
\end{cases}
\]
The Laplace expansion formula for Pfaffians reads as follows.
\begin{proposition}
\label{thm.Laplace}
Let $A=(a_{i,j})_{1\leq i,j\leq 2n}$ be a skew-sym\-metric matrix, 
and $\Gamma_{i,j}$ be as above.
Then we have
\[
  \sum_{k=1}^{2n}a_{i,k}\Gamma_{j,k} =
  \sum_{k=1}^{2n}a_{k,i}\Gamma_{k,j} = \delta_{i,j}\Pf A.
\]
\end{proposition}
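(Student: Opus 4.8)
The plan is to reduce the statement to the single-row Laplace expansion of a Pfaffian together with two elementary observations. First I would note that the array $\Gamma$ is skew-symmetric: comparing the cases $i<j$ and $j<i$ in its definition gives $\Gamma_{j,i}=-\Gamma_{i,j}$, and $\Gamma_{i,i}=0$. Since moreover $a_{k,i}=-a_{i,k}$, this immediately yields $\sum_{k}a_{k,i}\Gamma_{k,j}=\sum_{k}a_{i,k}\Gamma_{j,k}$, so the two sums in the proposition coincide and it suffices to prove $\sum_{k=1}^{2n}a_{i,k}\Gamma_{j,k}=\delta_{i,j}\Pf A$.

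The core is the diagonal case $i=j$, that is, the expansion $\Pf A=\sum_{k=1}^{2n}a_{j,k}\Gamma_{j,k}$. For $j=1$ this falls directly out of the definition: group the perfect matchings of $[2n]$ according to the partner $k$ of the element~$1$, and split each such matching into the block $\{1,k\}$ and a perfect matching of $[2n]\setminus\{1,k\}$. The sign of the full matching then factors as $(-1)^{k-2}$ --- the sign of the $(k-1)$-cycle that carries $k$ into the second position --- times the sign of the induced sub-matching; here $(-1)^{k-2}$ is exactly the prefactor in $\Gamma_{1,k}$, and summing over the sub-matchings reproduces $\Pf A(1,k)$. For a general row $j$ I would conjugate by the $j$-cycle $\pi$ determined by $\pi(1)=j$ and $\pi(\ell)=\ell-1$ for $2\le\ell\le j$ (and $\pi(\ell)=\ell$ otherwise): applying the $j=1$ case to $A^{\pi}$, using $\Pf(A^{\pi})=\sgn(\pi)\Pf A$, and writing $A^{\pi}(1,k)=\bigl(A(j,\pi(k))\bigr)^{\pi'}$ for the permutation $\pi'$ that $\pi$ induces on the remaining indices. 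Checking that the accumulated signs $\sgn(\pi)$, $\sgn(\pi')$ and $(-1)^{k-2}$ collapse precisely to the sign occurring in $\Gamma_{j,\pi(k)}$ is the one genuinely delicate point, and this sign bookkeeping is the main obstacle of the proof. (Alternatively one can dispense with $\pi$ and redo the partner-of-$j$ grouping from scratch, at the cost of a somewhat messier sign count.)

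It remains to treat the case $i\neq j$, where the familiar ``two equal rows'' trick applies. Let $B=(b_{k,\ell})$ be the matrix obtained from $A$ by overwriting row $j$ with row $i$ and, to keep the result skew-symmetric, column $j$ with column $i$: thus $b_{j,\ell}=a_{i,\ell}$ and $b_{\ell,j}=-a_{i,\ell}$ for $\ell\neq j$, $b_{j,j}=0$, and $b_{k,\ell}=a_{k,\ell}$ in all remaining positions. Then $B$ is skew-symmetric and its rows (and columns) $i$ and $j$ are identical, so $\Pf B=0$ by the vanishing property recorded just before the proposition. Moreover, deleting row and column $j$ erases the modification, so $B(j,k)=A(j,k)$ for every $k$, whence the $\Gamma$-array of $B$ coincides with $\Gamma$ along row $j$. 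Applying the row-$j$ expansion already established, now to the matrix $B$, we obtain $0=\Pf B=\sum_{k}b_{j,k}\Gamma_{j,k}=\sum_{k}a_{i,k}\Gamma_{j,k}$, which is exactly the asserted identity; and the identical computation with $B=A$ recovers the diagonal case. This completes the plan.
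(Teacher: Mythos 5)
Your argument is necessarily a different route from the paper's, because the paper does not prove Proposition~\ref{thm.Laplace} at all: it simply refers to \cite{IshikawaWakayama10}. What you give is the standard self-contained combinatorial proof, and its structure is correct. The skew-symmetry of $\Gamma$ (which does follow from comparing the two off-diagonal cases of its definition) correctly collapses the two sums into one; the diagonal case for $j=1$ by grouping perfect matchings according to the partner of $1$ is the right computation, with $(-1)^{k-2}$ matching the prefactor $(-1)^{k-1-1}$ in $\Gamma_{1,k}$; and the off-diagonal case via the modified matrix $B$ is clean --- one should just observe explicitly that rows $i$ and $j$ of $B$ also agree in columns $i$ and $j$ (both entries being $0$) and that the $k=j$ term of the sum is harmless since $\Gamma_{j,j}=0$. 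The one step you defer rather than execute is the passage from row $1$ to a general row $j$: you set up the conjugation by the $j$-cycle $\pi$ and then label the sign verification ``the main obstacle'' without doing it. That verification is the actual content of the lemma (the exponent $j-i-1$ in the definition of $\Gamma$ exists precisely to absorb these signs), so a complete write-up must carry it out. Fortunately your choice of $\pi$ makes it painless: $\pi$ restricted to $\{2,\dots,2n\}$ is order-preserving onto $[2n]\setminus\{j\}$, so the induced permutation $\pi'$ on the surviving indices is the identity and $\Pf\!\big(A^{\pi}(1,k)\big)=\Pf\!\big(A(j,\pi(k))\big)$ with no extra sign; the remaining factor $\sgn(\pi)(-1)^{k}=(-1)^{j-1+k}$ then equals $(-1)^{j-m}$ for $m=\pi(k)=k-1<j$ and $(-1)^{m-j-1}$ for $m=\pi(k)=k>j$, which are exactly the signs in $\Gamma_{j,m}$. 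With that computation inserted, the proof is complete and has the merit of being elementary and self-contained, whereas the paper's citation buys brevity at the cost of sending the reader to \cite{IshikawaWakayama10}.
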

\begin{proof}
This statement and its proof are found in~\cite{IshikawaWakayama10}.
\end{proof}

Hence if one puts $b_{2n}=\Pf A=\Pf(a_{i,j})_{1\leq i,j\leq2n}$ and 
$c_{2n,j}={\Gamma_{j,2n}}/{\Gamma_{2n-1,2n}}$ for
$1\leq i\leq 2n-1$,
then Proposition~\ref{thm.Laplace} implies that
$c_{2n,j}$ satisfies the following three identities:
\begin{alignat}{2}
c_{2n,2n-1} & = 1 && (n\geq 1), 
\label{eq.1}\\
\sum_{i=1}^{2n-1} c_{2n,i} a_{i,j} & = 0 && (1\leq j<2n), 
\label{eq.2}\\
\sum_{i=1}^{2n-1} c_{2n,i} a_{i,2n} & = \frac{b_{2n}}{b_{2n-2}} &\qquad& (n\geq 1).
\label{eq.3}
\end{alignat}
Conversely, one easily sees that the bivariate sequence~$c_{2n,j}$ is
uniquely characterized by Equations~\eqref{eq.1} and~\eqref{eq.2}, and
we can regard \eqref{eq.1}--\eqref{eq.3} as the formulation analogous
to \eqref{eq:cofactor1}--\eqref{eq:cofactor3} in order to evaluate the
Pfaffian $\Pf A=\Pf(a_{i,j})_{1\leq i,j\leq2n}$.  For this purpose,
one first has to guess a suitable implicit (i.e., holonomic)
description of the function~$c_{2n,i}$ and then show that it indeed
satisfies the above identities. Induction on~$n$ concludes the proof.
The methodology is illustrated in detail by an example in
Section~\ref{sec.Motz}.

Identities~\eqref{eq.1}, \eqref{eq.2}, and~\eqref{eq.3} can be proven
algorithmically in the spirit of the \emph{holonomic systems
  approach}~\cite{Zeilberger90}.  In the following sections the
software package~\texttt{HolonomicFunctions}~\cite{Koutschan10b} which
runs under the computer algebra system Mathematica is employed for
carrying out the necessary computations.  The
thesis~\cite{Koutschan09} describes the theoretical background and the
algorithms implemented therein.

%
\section{A Motzkin Number Pfaffian}\label{sec.Motz}
This section gives a detailed computer proof of a Pfaffian involving
the Motzkin numbers. It is stated as an open problem
in~\cite{IshikawaTagawaZeng10}, see Formula (6.3) there. The Motzkin
numbers~$M_n$ can be obtained by the formula
\[
  M_n=\sum_{k=0}^n \frac{1}{k+1}\binom{n}{2k}\binom{2k}{k}
     ={}_2F_1\!\left(\genfrac{}{}{0pt}{}{-\frac{n}{2},\frac{1-n}{2}}{2};4\right),
\]
where ${}_2F_1$ stands for the Gau\ss\ hypergeometric function.
They count Motzkin paths from $(0,0)$ to $(n,0)$; recall that a
Motzkin path is a path in the lattice~$\N_0^2$ that uses only the steps
\[
  U = (1,1),\qquad H = (1,0),\qquad D = (1,-1)
\]
and never runs below the horizontal axis
(see~\cite{DonagheyShapiro77}).

\begin{theorem}\label{thm.pfMotz}
For all integers $n\geq1$ the following identity holds:
\begin{equation}\label{eq.pfMotz}
  \Pf\big((j-i)M_{i+j-3}\big)_{1\leq i,j\leq 2n} = \prod_{k=0}^{n-1}(4k+1).
\end{equation}
\end{theorem}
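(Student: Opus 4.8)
The plan is to follow exactly the Pfaffian holonomic ansatz laid out in Section~\ref{sec.pfaffians}, with $a_{i,j}=(j-i)M_{i+j-3}$ and $b_{2n}=\prod_{k=0}^{n-1}(4k+1)$, so that $b_{2n}/b_{2n-2}=4n-3$. First I would \emph{guess} a holonomic description of the auxiliary function $c_{2n,i}$: compute the matrices $\bigl((j-i)M_{i+j-3}\bigr)_{1\le i,j\le 2n}$ for $n=1,2,3,\dots$ up to a size large enough that automated guessing (e.g.\ via \texttt{HolonomicFunctions}) can recover a system of linear recurrences in $n$ and $i$ with polynomial coefficients annihilating $c_{2n,i}=\Gamma_{i,2n}/\Gamma_{2n-1,2n}$. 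Here the first nontrivial task is to make sure the guessed ideal is rich enough (holonomic) and that its leading coefficients do not vanish on the relevant range; once that data is in hand, the three identities become routinely checkable.

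For identity~\eqref{eq.1}, I would simply verify $c_{2,1}=1$ (or the appropriate small base case) and then check that $c_{2n,2n-1}=1$ is consistent with, and forced by, the guessed recurrences together with the defining relations~\eqref{eq.2}. For identity~\eqref{eq.2}, the key step is to apply creative telescoping to the sum $S_j(n)=\sum_{i=1}^{2n-1}c_{2n,i}\,(j-i)M_{i+j-3}$: since $M_n$ is P-finite and $c_{2n,i}$ is (conjecturally) holonomic, the summand is holonomic in $(n,i,j)$, and the holonomic closure properties yield a P-finite system for $S_j(n)$ in the variables $n$ and $j$; one then confirms that $S_j(n)\equiv 0$ satisfies this system by matching sufficiently many initial values for small $n$ and $j<2n$. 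An important subtlety here is the handling of the summation bounds — the upper limit $2n-1$ depends on $n$, and $j$ ranges only over $1\le j<2n$ — so the natural boundaries and any $\delta$-corrections from Proposition~\ref{thm.Laplace} must be tracked carefully. For identity~\eqref{eq.3}, the same machinery applied to $T(n)=\sum_{i=1}^{2n-1}c_{2n,i}\,(2n-i)M_{i+2n-3}$ produces a univariate P-finite recurrence in $n$; I would then substitute the candidate $b_{2n}/b_{2n-2}=4n-3$, verify that it satisfies this recurrence, and check enough initial values. Induction on $n$ via the Laplace expansion~\eqref{eq.3} then gives~\eqref{eq.pfMotz}.

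The main obstacle I expect is the \emph{guessing} and holonomicity step: there is no a~priori guarantee that the cofactor quotient $c_{2n,i}$ for this particular matrix is holonomic, and even if it is, the recurrences may have large order and degree, making the required matrix sizes — and hence the linear-algebra for guessing and the subsequent creative-telescoping computations — quite heavy. A secondary difficulty is that Motzkin numbers satisfy a second-order recurrence with a polynomial leading coefficient that vanishes (at $n=-1$), which can force case distinctions or shifts in index to keep all sequences well-defined on the ranges that occur; the shifted argument $i+j-3$ means one must be attentive to the values $M_{-2},M_{-1},M_0$ at the corners of the matrix. Once these computational hurdles are cleared, everything else is a verification that the software can perform automatically, and the proof reduces to citing those certified computations together with the base cases and the inductive Laplace step.
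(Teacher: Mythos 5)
Your proposal follows essentially the same route as the paper's proof: guess a holonomic (Gr\"obner-basis) description of $c_{2n,i}$, verify identities~\eqref{eq.1}--\eqref{eq.3} by closure properties and creative telescoping, and conclude by induction via the Pfaffian Laplace expansion, with the quotient $b_{2n}/b_{2n-2}=4n-3$ checked against a P-finite recurrence and initial values. The subtleties you flag (natural boundaries, non-vanishing leading coefficients, holonomicity of the cofactor quotient) are exactly the ones the paper addresses, the boundary issue being settled there by explicitly proving $c_{2n,i}=0$ for $i\le 0$ and $i\ge 2n$ before invoking creative telescoping.
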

\begin{proof}
The proof is split into several parts which are presented in the
Sections~\ref{sec.implicit}--\ref{sec.eq3} below. The details of the
computations are contained in the supplementary electronic material
mentioned in the introduction.
\end{proof}

\subsection{Implicit Description for {\large $c_{2n,i}$}}
\label{sec.implicit}
The first step is to determine the auxiliary function~$c_{2n,i}$ that
appears in identities~\eqref{eq.1}--\eqref{eq.3}, where now
\[
  a_{i,j}=(j-i)M_{i+j-3}.
\]
Using the method of guessing, as implemented in the Mathe\-matica
package \texttt{Guess}~\cite{Kauers09}, one comes up with an implicit
description of this unknown function, namely the following three
linear recurrence equations with polynomial coefficients:
{\setlength{\arraycolsep}{0pt}
\begin{eqnarray}
&&\begin{array}{l}
  (i-1)(2n-3)(4n-7)c_{2n,i}=\\
  \quad -(2n+i-4)(8in-8i-8n^2+6n+3)c_{2(n-1),i-1}+{}\\
  \quad (i-1)(16in-16i+8n^2-34n+27)c_{2(n-1),i}+{}\\
  \quad 24i(i-1)(n-1)c_{2(n-1),i+1}-{}\\
  \quad (2n-3)(4n-7)(2n-i)c_{2n,i-1},
  \end{array}\label{eq.r1}\\[1em]
&&\begin{array}{l}
  (n\!-\!2)(2n\!-\!5)(4n\!-\!11)(4n\!-\!7)(2n\!-\!i\!-\!2)(2n\!-\!i\!-\!1)c_{2n,i} = \\
  \quad (2n\!-\!5)(4n\!-\!11)(8i^2n^2-24i^2n+17i^2-16in^2+48in-{}\\
  \qquad 33i-16n^4+108n^3-258n^2+258n-92)c_{2(n-1),i}-{}\\
  \quad (n-1)(4n-7)(2n+i-5)(32in^2-122in+{}\\
  \qquad 117i-32n^3+168n^2-280n+144)c_{2(n-2),i}+{}\\
  \quad 6i(4i+1)(n-2)(n-1)(2n-3)(4n-7)c_{2(n-2),i+1}+{}\\
  \quad 36i(i+1)(n-2)(n-1)(2n-3)(4n-7)c_{2(n-2),i+2},
  \end{array}\label{eq.r2}\\[1em]
&&\begin{array}{l}
  18n(i-3)(i-2)(i-1)c_{2n,i} = \\
  \quad (2n+i-4)(10i^2n-24in^2-63in+i+16n^3+{}\\
  \qquad 76n^2+97n-3)c_{2n,i-3}-{}\\
  \quad 2(i-3)n(7i^2-12in-46i+33n+73)c_{2n,i-2}-{}\\
  \quad 3(i-3)(i-2)n(14i-12n-39)c_{2n,i-1}-{}\\
  \quad (2n-1)(4n-3)(2n-i+4)(2n-i+3)c_{2(n+1),i-3}.
  \end{array}\label{eq.r3}
\end{eqnarray}
}
When they are rewritten in operator notation, these recurrences form a
left Gr\"obner basis in the corresponding noncommutative operator
algebra, which is a bivariate polynomial ring (in the indeterminates
$S_i$ and $S_n$, denoting the forward shift operators
w.r.t.~$i$ and~$n$, respectively) with coefficients
in~$\Q(i,n)$. Together with the initial values
\[
  c_{2,1} = 1,\quad c_{2,2} = c_{2,3} = 0,\quad c_{4,1} = 2,
\]
they uniquely define the bivariate sequence~$(c_{2n,i})_{n,i\geq 1}$.
Note that the leading coefficients of~\eqref{eq.r1}, \eqref{eq.r2},
and \eqref{eq.r3} never vanish simultaneously in the region where
these recurrences are used to produce the values~$c_{2n,i}$ (in the
first quadrant, basically).

\subsection{Boundary Conditions}
\label{sec.bounds}
The sequence~$c$ is now extended to~$(c_{2n,i})_{n\geq 1,i\in\Z}$ and
it is proven that the assumption $c_{2n,i}=0$ for $i\leq0$ and for
$i\geq 2n$ is compatible with the recurrences
\eqref{eq.r1}--\eqref{eq.r3}. This knowledge will be useful for the
subsequent reasoning.

Provided with the appropriate initial conditions
($c_{2,0}=c_{4,0}=0$), it is obvious that the recurrence~\eqref{eq.r2}
produces zeros on the line $i=0$, since the terms $c_{2(n-2),i+1}$ and
$c_{2(n-2),i+2}$ vanish.  Similarly for $i=-1$, since the term
$c_{2(n-2),i+2}$ still vanishes; again assuming
$c_{2,-1}=c_{4,-1}=0$. Because of these two zero rows, it is clear
that everything beyond them (i.e., for $i<-1$) must be zero as well. A
simple computation shows that setting the initial conditions to~$0$ is
compatible with the recurrences~\eqref{eq.r1}--\eqref{eq.r3}.

Since the leading coefficient of~\eqref{eq.r1} does not vanish for any
integer point in the area $n\geq 2$ and $i\geq 2n$, this recurrence
can be used to produce the values of $c_{2n,i}$ in this area. The
support of~\eqref{eq.r1} indicates that only $c_{2n,2n}=0$ needs to be
shown. The first instances of this sequence are zero by construction,
and thus we have just to check that the third-order recurrence (not
printed here) that is automatically derived for $c_{2n,2n}$ does not
have a singularity in its leading coefficient; this is indeed not the
case. The univariate sequence $c_{2n,2n}$ is called the
\emph{diagonal} of the bivariate sequence~$c_{2n,i}$. Diagonals appear
frequently in combinatorial problems and their fast computation is a
topic of ongoing research in computer algebra. We used the command
\texttt{DFiniteSubstitute} of~\cite{Koutschan10b} here to perform the
substitution $i\to 2n$, which corresponds to the computation of the
diagonal.

It remains to show that $c_{2,i}=0$ for $i>2$, which is done in a similar
fashion.

\subsection{Identity {\large\eqref{eq.1}}}
Analogously to the computation of the diagonal in the previous
section, an annihilating operator for $c_{2n,2n-1}$ (of order~$4$, not
printed here) is obtained. Its leading coefficient has no nonnegative
integer roots, and it has the operator $S_n-1$ as a right
factor. Therefore it annihilates any constant sequence. The four
initial values are~$1$ by construction and therefore $c_{2n,2n-1}=1$
for all~$n\in\N$.

\subsection{Identity {\large\eqref{eq.2}}}
Once the implicit descriptions of the bivariate sequences $a_{i,j}$
and $c_{2n,i}$ are available, in terms of zero-dimensional left ideals
of recurrence operators, the summation identities~\eqref{eq.2}
and~\eqref{eq.3} are routinely provable, thanks to software packages
like \texttt{HolonomicFunctions}~\cite{Koutschan10b}.  The strategy is
as follows: first the closure properties of holonomic functions are
employed to compute recurrences for the product $c_{2n,i}a_{i,j}$; the
command \texttt{DFiniteTimes} does the job.  Then the method of
creative telescoping is invoked to produce some recurrences for the
left-hand side of~\eqref{eq.2} (this expression is denoted
by~$g_{n,j}$ in the following).  Two different algorithms for this
task are implemented in our package, namely the commands
\texttt{CreativeTelescoping} (Chyzak's algorithm~\cite{Chyzak00}) and
\texttt{FindCreativeTelescoping} (an alternative ansatz proposed by
the second author~\cite{Koutschan10c}). In order to prove
Identity~\eqref{eq.2} for instance, some operators of the form
\[
  P(j,n,S_j,S_n) + (S_i-1)Q(i,j,n,S_i,S_j,S_n)
\]
which annihilate the summand $c_{2n,i} a_{i,j}$ are computed.  It has
already been proven in Section~\ref{sec.bounds} that~$c_{2n,i}$ is
zero outside the summation range which implies that the sum runs over
natural boundaries. Therefore the \emph{principal parts} (or
\emph{telescopers}, denoted by $P$ above) of the creative telescoping
operators annihilate the sum, and the delta parts (denoted by~$Q$) can
be disregarded.  As a result we find
\[
\begin{split}
  & j(4n-7)(2n+j-2)g_{n,j} = {}\\
  & \qquad j(4n-3)(j-n+1)g_{n-1,j}+{}\\
  & \qquad (n-1)(4n-3)(2n-j-3)g_{n-1,j+1},\\[1em]
  & (j-2n)(2n+j-2)g_{n,j} = {}\\
  & \qquad 3(j-2)(j-1)g_{n,j-2} + (j-1)(2j-3)g_{n,j-1}.
\end{split}
\]
A close inspection reveals that only the initial values $g_{1,1}$,
$g_{2,1}$, and $g_{2,2}$ need to be given, if the above recurrences
shall be used to compute all values of $g_{n,j}$ for $n\geq 1$ and
$1\leq j<2n$. A simple calculation shows that they are all zero,
concluding the proof of~\eqref{eq.2}.

Note that the above reasoning is somehow about the maximal possible
area: if one tries to extend it further, i.e., to show that
$g_{n,j}=0$ in the whole first quadrant, the first step being the
points $j=2n$, then the second recurrence, the only one that is
applicable in this case, breaks down. Indeed, the values~$g_{n,2n}$
are nonzero as is demonstrated in the next section.

\subsection{Identity {\large\eqref{eq.3}}}\label{sec.eq3}
Identity~\eqref{eq.3} is done in a very similar fashion, using the
method of creative telescoping. Again the summation is over natural
boundaries. Thus the principal part of the computed creative
telescoping operator gives rise to a recurrence for the left-hand side
of~\eqref{eq.3} which is denoted by~$r_n$ here:
\[
  \begin{array}{l}
   2(4n-11)(4n-7)(4n-5)(7n-13)r_n = \\
   \qquad (4n-11)(350n^3-1413n^2+1798n-714)r_{n-1}-{}\\
   \qquad 9(n-2)(2n-3)(4n-7)(7n-6)r_{n-2}.
  \end{array}
\]
For $n=1$ and $n=2$ the summation in~\eqref{eq.3} yields the initial
values $r_1=1$ and $r_2=5$. It is easily verified that the unique
solution of the above recurrence is $r_n=4n-3$. Since
$r_n=b_{2n}/b_{2n-2}$ gives the quotients of two consecutive Pfaffians
$b_{2n}=\Pf(a_{i,j})_{1\leq i,j\leq 2n}$, it follows that
\[
  b_{2n} = \prod_{k=1}^n\frac{b_{2k}}{b_{2k-2}} = \prod_{k=1}^n(4k-3) = \prod_{k=0}^{n-1}(4k+1).
\]
This concludes the proof of Theorem~\ref{thm.pfMotz}.

%
%
\section{A Delannoy Number Pfaffian}\label{sec.Dela}
We now consider a Pfaffian that appears as Formula (6.4)
in~\cite{IshikawaTagawaZeng10}, again as an open problem.
\begin{theorem}
Let 
\[
  D_n=\sum_{k=0}^n\binom{n}{k}\binom{n+k}{k}
\]
denote the $n$-th central Delannoy number. Then for all integers
$n\geq 1$ the following identity holds:
\begin{equation}\label{eq.pfDela}
  \Pf\big((j-i)D_{i+j-3}\big)_{1\leq i,j\leq 2n} = 2^{(n+1)(n-1)}(2n-1)\prod_{k=1}^{n-1}(4k-1).
\end{equation}
\end{theorem}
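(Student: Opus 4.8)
The plan is to mimic exactly the strategy developed for Theorem~\ref{thm.pfMotz}, now applied to the skew-symmetric matrix with entries $a_{i,j}=(j-i)D_{i+j-3}$, where $D_n$ is the central Delannoy number. First I would record an implicit (holonomic) description of the auxiliary function $c_{2n,i}=\Gamma_{i,2n}/\Gamma_{2n-1,2n}$ via guessing: since the $D_n$ satisfy the second-order recurrence $n D_n=3(2n-1)D_{n-1}-(n-1)D_{n-2}$, the bivariate sequence $a_{i,j}$ is holonomic, and one expects (as in the Motzkin case) that $c_{2n,i}$ is annihilated by a zero-dimensional left ideal in the operator algebra $\Q(i,n)\langle S_i,S_n\rangle$. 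Concretely I would run the \texttt{Guess} package on numerically computed values of $c_{2n,i}$ (obtained by solving the linear system coming from~\eqref{eq.1} and~\eqref{eq.2}) to produce a left Gr\"obner basis of recurrences, together with enough initial values to pin down $(c_{2n,i})_{n,i\ge1}$ uniquely, and I would check that the leading coefficients of these recurrences do not all vanish simultaneously in the relevant region.

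Next I would establish the boundary behaviour: extend $c$ to $i\in\Z$ and verify, using the guessed recurrences and suitable zero initial conditions, that $c_{2n,i}=0$ for $i\le 0$ and for $i\ge 2n$. As in Section~\ref{sec.bounds} this amounts to checking that certain recurrences reproduce zero rows, and to deriving (by a diagonal substitution $i\to 2n$, via \texttt{DFiniteSubstitute}) a univariate annihilating operator for the diagonal $c_{2n,2n}$ whose leading coefficient has no nonnegative integer roots, so that the vanishing of the first few values forces $c_{2n,2n}=0$ throughout. The same knowledge (the sum runs over natural boundaries) is what makes Identities~\eqref{eq.2} and~\eqref{eq.3} routinely provable by creative telescoping.

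With the implicit descriptions of $a_{i,j}$ and $c_{2n,i}$ in hand, I would then discharge the three identities~\eqref{eq.1}--\eqref{eq.3} in sequence. For~\eqref{eq.1}, derive an annihilating operator for $c_{2n,2n-1}$, check it has $S_n-1$ as a right factor and no bad leading-coefficient roots, and match the initial value $1$. For~\eqref{eq.2}, compute an annihilator of the product $c_{2n,i}a_{i,j}$ with \texttt{DFiniteTimes}, apply creative telescoping (Chyzak's algorithm or \texttt{FindCreativeTelescoping}) to get recurrences for $g_{n,j}=\sum_i c_{2n,i}a_{i,j}$, discard the delta parts because of the natural boundaries, and verify that the finitely many required initial values $g_{n,j}$ all vanish. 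For~\eqref{eq.3}, creative telescoping yields a P-finite recurrence for $r_n=\sum_i c_{2n,i}a_{i,2n}=b_{2n}/b_{2n-2}$; I would substitute the conjectured ratio $r_n=2^{2n-1}(4n-1)/(4n-5)$ — obtained by telescoping the claimed product $2^{(n+1)(n-1)}(2n-1)\prod_{k=1}^{n-1}(4k-1)$ — into this recurrence, check it is satisfied identically, and compare initial values; the Pfaffian evaluation~\eqref{eq.pfDela} then follows by induction on $n$ from the cofactor expansion (Proposition~\ref{thm.Laplace}).

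The main obstacle is the same gamble as in the determinant case: the whole approach hinges on $c_{2n,i}$ actually being holonomic, and on the guessed recurrences being correct and of manageable size. Because the Delannoy recurrence has a larger leading coefficient than the Motzkin one and the closed form on the right-hand side of~\eqref{eq.pfDela} is more complicated (a product of linear factors times a quadratic-exponential prefactor), the guessed ideal for $c_{2n,i}$, the telescopers for $g_{n,j}$, and especially the recurrence for $r_n$ are likely to have substantially higher order and degree than their Motzkin analogues, which stresses both the guessing step (needing many more data points) and the creative-telescoping step (the dominant cost). If those computations go through — and experience with such Hankel-type Pfaffians suggests they will — the verification that $r_n$ matches the telescoped closed form and the final induction are entirely routine.
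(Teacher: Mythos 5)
Your proposal follows essentially the same route as the paper: the authors likewise just guess a holonomic description of $c_{2n,i}$ (they print two defining recurrences) and state that the rest is ``very analogous'' to the Motzkin case, with details relegated to the Mathematica notebook. One concrete slip: your telescoped ratio is wrong. From $b_{2n}=2^{(n+1)(n-1)}(2n-1)\prod_{k=1}^{n-1}(4k-1)$ one gets
\[
  r_n=\frac{b_{2n}}{b_{2n-2}}=\frac{2^{2n-1}(2n-1)(4n-5)}{2n-3},
\]
not $2^{2n-1}(4n-1)/(4n-5)$ (check $n=2$: the ratio is $72$, while your expression gives $56/3$); with the corrected ratio the verification against the creative-telescoping recurrence and the final induction go through as you describe.
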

\begin{proof}
The auxiliary function~$c_{2n,i}$ in this example can be defined by
the following recurrences (plus a sufficient amount of initial
values): 
{\setlength{\arraycolsep}{0pt}
\begin{eqnarray*}
&&\begin{array}{l}
   2(i-3)(i-2)(i-1)c_{2n,i} = \\
   \quad 3(i-3)(i-2)(8i-27)c_{2n,i-1}-{}\\
   \quad (i-3)(76i^2-589i-8n^2+16n+1109)c_{2n,i-2}+{}\\
   \quad 3(8i^3-105i^2-16in^2+32in+443i+68n^2-{}\\
   \qquad 136n-600)c_{2n,i-3}-{}\\
   \quad (2i-11)(i-2n-3)(i+2n-7)c_{2n,i-4},
  \end{array}\\[1em]
&&\begin{array}{l}
   2(n-2)(2n-3)(4n-9)(i-2n+1)(i-2n+2)c_{2n,i} = \\
   \quad (n-1)(i+2n-5)(68i^2n-102i^2-96in^2+178in-{}\\
   \qquad 43i+64n^3-208n^2+200n-56)c_{2(n-1),i}-{}\\
   \quad 6i(n\!-\!1)(2n-3)(35i^2+4in-66i-n+14)c_{2(n-1),i+1}+{}\\
   \quad i(i+1)(n-1)(2n-3)(70i+4n-31)c_{2(n-1),i+2}-{}\\
   \quad 6i(i+1)(i+2)(n-1)(2n-3)c_{2(n-1),i+3}.
  \end{array}
\end{eqnarray*}
} The proof is very analogous to the one of Theorem~\ref{thm.pfMotz},
see the accompanying Mathematica notebook for the details.
\end{proof}

%
%
\section{A Narayana Number Pfaffian}\label{sec.Nara}
The following Pfaffian appears as Formula~(6.6) in~\cite{IshikawaTagawaZeng10}:
\begin{theorem}\label{thm.pfNara}
Let $N_n(x)$ denote the $n$-th Narayana polynomial defined by
\begin{eqnarray*}
  N_0(x) & = & 1,\\
  N_n(x) & = & \sum_{k=0}^n\frac{1}{n}\binom{n}{k}\binom{n}{k-1}x^k,\quad (n\geq 1).
\end{eqnarray*}
Then for all $n\geq0$ the following identity holds:
\begin{equation}\label{eq.pfNara}
  \Pf\big((j-i)N_{i+j-2}(x)\big)_{1\leq i,j\leq 2n} = x^{n^2}\prod_{k=0}^{n-1}(4k+1).
\end{equation}
\end{theorem}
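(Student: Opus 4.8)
The plan is to proceed exactly as in the proof of Theorem~\ref{thm.pfMotz}, carrying the indeterminate~$x$ along as a transcendental parameter throughout. Since $N_n(x)$ is P-recursive in~$n$ over $\Q(x)$, the matrix entries $a_{i,j}=(j-i)N_{i+j-2}(x)$ form a bivariate holonomic sequence in $(i,j)$ over the field $\Q(x)$, so the holonomic-ansatz machinery applies for $n\ge 1$; the case $n=0$ is handled separately, both sides of~\eqref{eq.pfNara} being equal to~$1$. For $n\ge 1$ one invokes Proposition~\ref{thm.Laplace}, which reduces the evaluation to establishing the three identities~\eqref{eq.1}--\eqref{eq.3} for the auxiliary sequence $c_{2n,i}=\Gamma_{i,2n}/\Gamma_{2n-1,2n}$, together with induction on~$n$.

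First I would guess, via the standard recurrence-interpolation technique, a zero-dimensional left ideal of recurrence operators in $S_n$ and $S_i$ with coefficients in $\Q(x)(i,n)$ annihilating $c_{2n,i}$, supplemented by enough initial values to pin the sequence down uniquely through~\eqref{eq.1} and~\eqref{eq.2}. Next I would check that the assumption $c_{2n,i}=0$ for $i\le 0$ and for $i\ge 2n$ is compatible with these recurrences, so that all subsequent summations run over natural boundaries. Identity~\eqref{eq.1} is then obtained by the diagonal-type substitution $i\to 2n-1$ in the ideal, verifying that the resulting univariate operator has $S_n-1$ as a right factor and a leading coefficient without nonnegative integer roots, and comparing initial values. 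For~\eqref{eq.2} and~\eqref{eq.3} I would compute an annihilating ideal for the product $c_{2n,i}a_{i,j}$ by closure properties and then apply creative telescoping; since the summands vanish outside $1\le i\le 2n-1$, the delta parts may be discarded and the telescopers yield genuine recurrences for $g_{n,j}:=\sum_i c_{2n,i}a_{i,j}$ and for $r_n:=\sum_i c_{2n,i}a_{i,2n}$. A finite initial-value check gives $g_{n,j}=0$ for $1\le j<2n$, which is~\eqref{eq.2}; and for~\eqref{eq.3} one verifies that $r_n=x^{2n-1}(4n-3)$ is the unique solution of its recurrence matching the initial data. As $r_n=b_{2n}/b_{2n-2}$ with $b_{2n}=\Pf\big((j-i)N_{i+j-2}(x)\big)_{1\le i,j\le 2n}$, telescoping then gives
\[
  b_{2n}=\prod_{k=1}^{n}r_k=\prod_{k=1}^{n}x^{2k-1}(4k-3)=x^{n^2}\prod_{k=0}^{n-1}(4k+1),
\]
which is~\eqref{eq.pfNara}.

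The hard part is computational rather than conceptual. As with the Motzkin Pfaffian, it is not guaranteed in advance that $c_{2n,i}$ is holonomic, so the guessing step is a genuine gamble; and even once it succeeds, every computation now takes place over $\Q(x)(i,n)$ rather than $\Q(i,n)$, which enlarges the recurrences, the Gr\"obner basis computations, and the creative-telescoping steps considerably. One must also verify that the leading coefficients of the guessed recurrences do not vanish simultaneously on the region where they are used to propagate the values of $c_{2n,i}$, and likewise that the univariate recurrences for the diagonal and for $r_n$ have no unlucky singularities in their leading coefficients. The explicit operators and the verification of all initial-value conditions are deferred to the accompanying Mathematica notebook.
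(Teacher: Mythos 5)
Your proposal is correct and follows essentially the same route as the paper: the holonomic ansatz for Pfaffians from Section~2, with the parameter~$x$ carried through the guessing, boundary, and creative-telescoping steps, and the quotient $r_n=b_{2n}/b_{2n-2}=x^{2n-1}(4n-3)$ telescoped to the stated product. The paper's actual computations fix the leading-coefficient issue by proving the identity for $x<-1$ and then extending to all~$x$ by a polynomial-identity argument, but it explicitly offers the formal-parameter treatment you adopt as an equivalent alternative.
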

\begin{proof}
Again, the proof of this evaluation is analogous to the previous ones
of~\eqref{eq.pfMotz} and~\eqref{eq.pfDela}, see the accompanying
Mathematica notebook for the details. The main difference is that now
the free parameter~$x$ is involved, which on the one hand makes the
computations and the intermediate results more voluminous. On the
other hand, some arguments in the proof (like ``the leading
coefficient of some recurrence is never zero'') become more intricate.

One solution to address the latter issue is to argue that $x$ is a
formal parameter; then any polynomial in $x$ which is not identically
zero, is considered to be nonzero (as an element in the corresponding
polynomial ring). If one feels uneasy about this argument, one can as
well try to find conditions under which all steps of the proof are
sound; for our reasoning the assumption $x<-1$ was sufficient. Hence
the evaluation is proven only for $x<-1$.  But for specific~$n$, the
Pfaffian is a polynomial in~$x$ (of a certain degree), as well as the
evaluation on the right-hand side of~\eqref{eq.pfNara}. Thus their
difference is a polynomial in~$x$ which has been proven to be zero for
all $x<-1$. By the fundamental theorem of algebra it follows that this
polynomial is identically zero, and therefore the evaluation of the
Pfaffian is true for all complex numbers~$x$.
\end{proof}

\begin{corollary}\label{thm.pfSchr}
Let 
\[
  S_n=\sum_{k=0}^n \frac{1}{k+1}\binom{n+k}{2k}\binom{2k}{k}
\]
denote the (large) Schr\"oder numbers. Then for all integers $n\geq0$
the following identity holds:
\[
  \Pf\big((j-i)S_{i+j-2}\big)_{1\leq i,j\leq 2n} = 2^{n^2}\prod_{k=0}^{n-1}(4k+1).
\]
\end{corollary}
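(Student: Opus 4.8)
The plan is to derive the Schröder-number evaluation from the Narayana-polynomial identity in Theorem~\ref{thm.pfNara} by a single specialization of the parameter~$x$. The key observation is that the large Schröder numbers are exactly the Narayana polynomials evaluated at~$x=2$: indeed, the classical identity
\[
  S_n = \sum_{k=0}^n \frac{1}{k+1}\binom{n+k}{2k}\binom{2k}{k} = \sum_{k=0}^n \frac{1}{n}\binom{n}{k}\binom{n}{k-1}2^k = N_n(2)
\]
holds for all $n\geq 1$ (and trivially $S_0 = 1 = N_0(2)$), since $N_n(2)$ is the well-known generating polynomial counting Schröder paths by the number of their diagonal steps. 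First I would verify this combinatorial identity, either by matching the known generating functions $\sum_{n\geq0} S_n t^n = \bigl(1-t-\sqrt{1-6t+t^2}\bigr)/(2t)$ and $\sum_{n\geq0} N_n(2) t^n$, or simply by invoking creative telescoping / Zeilberger's algorithm to check that both sides satisfy the same recurrence and initial values.

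Once $S_{i+j-2} = N_{i+j-2}(2)$ is established for all relevant indices, the matrix $\bigl((j-i)S_{i+j-2}\bigr)_{1\leq i,j\leq 2n}$ is literally the matrix $\bigl((j-i)N_{i+j-2}(x)\bigr)_{1\leq i,j\leq 2n}$ specialized at $x=2$. Since the Pfaffian is a polynomial in the matrix entries, it commutes with this substitution, so
\[
  \Pf\bigl((j-i)S_{i+j-2}\bigr)_{1\leq i,j\leq 2n}
  = \Pf\bigl((j-i)N_{i+j-2}(x)\bigr)_{1\leq i,j\leq 2n}\Big|_{x=2}
  = \left(x^{n^2}\prod_{k=0}^{n-1}(4k+1)\right)\Bigg|_{x=2}
  = 2^{n^2}\prod_{k=0}^{n-1}(4k+1),
\]
which is precisely the claimed evaluation. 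This works uniformly for all $n\geq 0$.

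There is essentially no obstacle of a computer-algebra nature here, because Theorem~\ref{thm.pfNara} was already proven for \emph{all} complex~$x$ (the fundamental-theorem-of-algebra argument in its proof removed the temporary restriction $x<-1$), so in particular it is valid at $x=2$; no re-examination of leading-coefficient nonvanishing or of the $x<-1$ hypothesis is needed. The only point that requires a line of justification is the identity $S_n = N_n(2)$, and the mild subtlety that the Narayana polynomials in Theorem~\ref{thm.pfNara} are indexed by $N_{i+j-2}$ with argument shift matching $S_{i+j-2}$ exactly — so the index arithmetic lines up with no off-by-one discrepancy. Hence the corollary is immediate, and I would present it in just these two or three sentences, deferring the routine verification of $S_n=N_n(2)$ to a citation of the standard literature on Schröder paths or to an automated proof.
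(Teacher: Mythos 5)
Your proposal is correct and follows exactly the paper's own route: the corollary is obtained by specializing Theorem~\ref{thm.pfNara} at $x=2$ via the identity $S_n=N_n(2)$, which the paper likewise notes can be verified by Zeilberger's algorithm from the definitions. Your additional remarks about the validity of the theorem at $x=2$ (thanks to the polynomial-identity argument) and the index alignment are accurate but not needed beyond the paper's one-line justification.
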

\begin{proof}
This identity follows from Theorem~\ref{thm.pfNara} and the equality
$S_n=N_n(2)$; the latter fact can be easily proven from the
definitions of these quantities using Zeilberger's algorithm, for
example.
\end{proof}

In~\cite{IshikawaTagawaZeng10} it has already been noted that
Theorem~\ref{thm.pfNara} implies the Pfaffian of
Corollary~\ref{thm.pfSchr} involving the Schr\"oder numbers.
Similarly, it is stated there that also the Pfaffian~\eqref{eq.pfMotz}
is a special case of Theorem~\ref{thm.pfNara}. However, in order to
reflect the historic evolution of our results and for reasons of a
clear presentation, we included Theorem~\ref{thm.pfMotz} and its
detailed proof in this article.

%
%
\section{Application of Theorem 2} 
\label{sec.Appl}

Let $A=(a_{i,j})_{1\leq i\leq n,\,j\geq1}$ be any $n$-rowed matrix.
If $J=\left\{j_{1},\dots,j_{n}\right\}$ 
is a set of column indices,
then we write $A_{J}=A_{j_{1},\dots,j_{n}}$
for the square submatrix of size~$n$ obtained from $A$ by choosing 
the columns indexed by~$J$.
If $A=(a_{i,j})_{i,j\geq1}$ is a matrix with
infinitely many rows and columns,
and $I=\left\{i_{1},\dots,i_{n}\right\}$  (resp. $J=\left\{j_{1},\dots,j_{n}\right\}$)
is a set of row (resp. column) indices,
then let $A^{I}_{J}=A^{i_{1},\dots,i_{n}}_{j_{1},\dots,j_{n}}$
denote the square submatrix of size $n$ obtained from $A$ by choosing 
the rows~$I$ and the columns~$J$.

A \defterm{partition} is a nonincreasing sequence 
$\lambda=(\lambda_{1},\lambda_{2},\dots)$
of nonnegative integers with only finitely many nonzero elements.
The number of nonzero elements 
in $\lambda$ is called the \defterm{length} of $\lambda$ and
 is denoted by $l(\lambda)$.
An \defterm{odd partition} is a partition with odd parts 
and an \defterm{even partition} is a partition with even parts.
The \defterm{conjugate} of $\lambda$ is the partition
$\lambda'=(\lambda_1',\lambda_2',\dots)$,
where $\lambda_i'$ is the number defined by
$
\lambda_{i}'=\#\{j\,|\,\lambda_{j}\geq i\}.
$
Given a partition $\lambda$ such that $l(\lambda)\leq n$,
let $I_n(\lambda)$ denote the $n$-element set of nonnegative integers
defined by
\[
I_{n}(\lambda)=\left\{\lambda_{n}+1,\lambda_{n-1}+2,\dots,\lambda_{1}+n\right\}.
\]
For example, $\lambda=(3,3,1,1)$ is an odd partition of length~$4$,
and $I_{4}(\lambda)=\{2,3,6,7\}$.
The conjugate of $\lambda$ equals $(4,2,2)$,
which is an even partition.

Let 
 $H(n)=\left(h(i,j)\right)_{1\leq i\leq n,\,j\geq1}$
denote the $n$-rowed matrix
whose entries are given by
\begin{align}
& h(i,2k-1) = \left(\!\genfrac{}{}{0pt}{}{i-1}{k-1}\!\right)\,{}_{2}F_{1}\!
\left(\genfrac{}{}{0pt}{}{\frac{k-i}2,\frac{k-i+1}2}{k+1};4\right), \label{eq:h-2k-1}\\
& h(i,2k) = (i-1)\left(\!\genfrac{}{}{0pt}{}{i-2}{k-1}\!\right)\,{}_{2}F_{1}\!
\left(\genfrac{}{}{0pt}{}{\frac{k-i+1}2,\frac{k-i+2}2}{k+1};4\right).\label{eq:h-2k}
\end{align}
In fact $h(i,2k-1)$ is the number of Motzkin paths from $(0,0)$ to $(i-1,k-1)$.
We also note that $h(i,2k)=k[x^{i+k-1}](1+x+x^2)^{i-1}$,
where $[x^n]f(x)$ denotes the coefficient of $x^n$ in a polynomial $f(x)$.
For example,
if $n=4$, then we have
\[
H(4)=\begin{pmatrix} 
1&0&0&0&0&0&0&0&\hdots\\
1&1&1&0&0&0&0&0&\hdots\\
2&2&2&2&1&0&0&0&\hdots\\
4&6&5&6&3&3&1&0&\hdots
\end{pmatrix}.
\]
For example,
$h(4,3)=5$ gives the number of Motzkin paths from $(0,0)$ to $(3,1)$:
\[
UUD,\quad UHH,\quad UDU,\quad HUH,\quad HHU.
\]
Meanwhile, $h(4,4)=6$ equals $2$ times the coefficient of $x^5$ in $(1+x+x^2)^3$.
The main purpose of this section is to give a proof of the following theorem
as a corollary of Theorem~\ref{thm.pfMotz}.
\begin{theorem}
\label{th:main-application}
Let $n$ be a positive integer, and let $H(n)$ be as above.
Then we have
\begin{align}
\sum_{\genfrac{}{}{0pt}{}{\lambda}{\lambda,\lambda'\text{ even}}}
\det H(2n)_{I_{2n}(\lambda)}=\prod_{k=0}^{n-1}(4k+1), 
\label{eq:application}
\end{align}
where the sum on the left-hand side runs over all even partitions $\lambda$ such that
$\ell(\lambda)\leq2n$ and $\lambda'$ is also even.
\end{theorem}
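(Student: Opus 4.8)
The plan is to derive Theorem~\ref{th:main-application} from Theorem~\ref{thm.pfMotz} by recognizing the left-hand side of~\eqref{eq:application} as the Pfaffian expansion of an antisymmetrized minor sum, via the classical \emph{minor summation formula} of Ishikawa--Wakayama. Recall that this formula states that for an $n$-rowed matrix $A$ (with $n=2m$ even) and a skew-symmetric array $(\alpha_{k,l})$ indexed by columns, one has
\[
  \sum_{J}\Pf\big((\alpha_{j_p,j_q})_{1\leq p,q\leq 2m}\big)\det A_J
  = \Pf\Big(\sum_{k<l}\alpha_{k,l}\big(a_{i,k}a_{j,l}-a_{i,l}a_{j,k}\big)\Big)_{1\leq i,j\leq 2m},
\]
where $J$ runs over $2m$-element sets of column indices. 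I would apply this with $A = H(2n)$, so $m=n$, and with the skew-symmetric weights $\alpha_{k,l}$ chosen so that the left-hand side collapses to exactly the sum over even partitions $\lambda$ with $\lambda'$ even. The natural candidate, given the appearance of even partitions and their conjugates, is the weight that equals $\pm1$ on pairs $\{k,l\}$ of the form $\{2t-1,2t\}$ (or a closely related pattern forcing consecutive pairing), which is the standard device that restricts minor sums to partitions with only even parts on both sides; this is precisely the combinatorial mechanism behind formulas relating such constrained sums to Pfaffians of shifted matrices.

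The key steps, in order, would be: first, make the correspondence between $2n$-element subsets $J$ of $\N$ and partitions $\lambda$ with $\ell(\lambda)\le 2n$ explicit via $J = I_{2n}(\lambda)$, and verify that the chosen $\alpha_{k,l}$ forces $\Pf((\alpha_{j_p,j_q}))$ to vanish unless $\lambda$ is even and $\lambda'$ is even, in which case it equals $1$ (or a sign that works out to $1$). Second, compute the entries of the resulting $2n\times 2n$ skew-symmetric matrix on the right-hand side of the minor summation formula: with $b_{i,j} = \sum_{k<l}\alpha_{k,l}(h(i,k)h(j,l) - h(i,l)h(j,k))$, one must show that $b_{i,j} = (j-i)M_{i+j-3}$. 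This is the heart of the matter and amounts to a bilinear/convolution identity among the quantities $h(i,k)$ defined in~\eqref{eq:h-2k-1}--\eqref{eq:h-2k}; it should follow from the combinatorial interpretation of $h(i,2k-1)$ as counting Motzkin paths from $(0,0)$ to $(i-1,k-1)$, together with the coefficient-extraction formula $h(i,2k) = k[x^{i+k-1}](1+x+x^2)^{i-1}$, by a generating-function manipulation or a sign-reversing involution on pairs of paths. Third, invoke Theorem~\ref{thm.pfMotz} to evaluate $\Pf((j-i)M_{i+j-3})_{1\leq i,j\leq 2n} = \prod_{k=0}^{n-1}(4k+1)$, which matches the right-hand side of~\eqref{eq:application} and completes the proof.

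The main obstacle I anticipate is the second step: establishing that the specific weighted bilinear combination of the $h(i,k)$ collapses exactly to $(j-i)M_{i+j-3}$. This requires identifying the right skew-symmetric weight $\alpha_{k,l}$ (there is genuine freedom here, and getting the normalization and support right so that both the partition restriction \emph{and} the target matrix come out correctly is delicate), and then proving the bilinear identity itself. I would attack the latter by passing to generating functions: encode the odd-indexed columns through the Motzkin-path generating function and the even-indexed columns through $(1+x+x^2)^{i-1}$, so that the sum over $k<l$ becomes a residue computation in two variables. An alternative, purely combinatorial route is to set up a bijection or sign-reversing involution on the set of pairs (a Motzkin-type path for row $i$, a weighted object for row $j$) that cancels everything except a residual set counted by $(j-i)M_{i+j-3}$; the factor $(j-i)$ strongly suggests a ``difference of two nearly-identical path families'' phenomenon. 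Once this identity is in hand, the rest is bookkeeping and a direct appeal to Theorem~\ref{thm.pfMotz}.
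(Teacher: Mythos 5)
Your overall strategy is exactly the one the paper follows: apply the Ishikawa--Wakayama minor summation formula (Lemma~\ref{th:msf}) to $T=H(2n)$ with the skew-symmetric weight supported on consecutive pairs $\{2t-1,2t\}$ (which is precisely the paper's choice of $a_{i,j}$), observe that the resulting Pfaffian coefficient $\Pf(A^I_I)$ equals $1$ exactly when $I=I_{2n}(\lambda)$ for $\lambda$ even with $\lambda'$ even and vanishes otherwise, identify the entries of $Q=TAT^{\mathrm T}$ with $(j-i)M_{i+j-3}$, and finish by Theorem~\ref{thm.pfMotz}. So there is no divergence in the architecture of the argument.

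There is, however, a genuine gap at the step you yourself single out as the heart of the matter: the identity
\begin{equation*}
\sum_{k\geq1}\det\begin{pmatrix}h(i,2k-1)&h(i,2k)\\ h(j,2k-1)&h(j,2k)\end{pmatrix}=(j-i)M_{i+j-3}
\end{equation*}
is never actually established. You propose two possible routes (a two-variable generating-function/residue computation, or a sign-reversing involution on pairs of lattice paths) but carry out neither, and you explicitly acknowledge that even pinning down the correct weight and normalization is delicate. The paper closes this gap not by a combinatorial argument but by substituting the hypergeometric representations \eqref{eq:h-2k-1}--\eqref{eq:h-2k} of $h(i,2k-1)$ and $h(i,2k)$ into the $2\times2$ determinant and invoking a known ${}_2F_1$ addition formula, namely \eqref{okinawa} (Lemma 5.2 of Ishikawa--Tagawa--Zeng), which collapses each of the two resulting sums over $k$ to a single ${}_2F_1$ evaluating to $M_{i+j-3}$; the factor $(j-i)$ then falls out as $(j-1)-(i-1)$. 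Without this (or a completed version of one of your proposed alternatives), the proof is incomplete: everything else in your outline is bookkeeping, but the bilinear identity is the entire content of the reduction to Theorem~\ref{thm.pfMotz}.
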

We notice that this theorem is a consequence of an addition formula for ${}_2F_1$ and Theorem~\ref{thm.pfMotz}.
But it is not so easy to find a lattice path interpretation of $\det H(2n)_{I_{2n}(\lambda)}$
since we do not know a lattice path interpretation of $h(i,2k)$. 
To prove the theorem, we cite the following two lemmas 
from~\cite{IshikawaTagawaZeng09} and~\cite{IshikawaWakayama10}.
\begin{lemma}
If $i$ and $j$ are nonnegative integers, then we have
\begin{multline}
\label{okinawa}
  \sum_{k\geq 0}\left(\!\genfrac{}{}{0pt}{}{i}{k}\!\right)\!\!\left(\!\genfrac{}{}{0pt}{}{j}{k}\!\right)
  {}_2F_1\!\left(\genfrac{}{}{0pt}{}{\frac{k-i+1}{2},\frac{k-i}{2}}{k+2};4\right)
  {}_2F_1\!\left(\genfrac{}{}{0pt}{}{\frac{k-j+1}{2},\frac{k-j}{2}}{k+2};4\right)=\\
  {}_2F_1\!\left(\genfrac{}{}{0pt}{}{\frac{1-i-j}{2},\frac{-i-j}{2}}{2};4\right).
\end{multline}
\end{lemma}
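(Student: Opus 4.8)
The plan is to recognise both sides of~\eqref{okinawa} as Motzkin path counts and then prove the resulting identity by a ``cut in the middle'' bijection. First I would rewrite the two ${}_2F_1$ factors on the left. Comparing with~\eqref{eq:h-2k-1}, after the index shift $i\mapsto i+1$, $k\mapsto k+1$, one finds
\[
  \binom{i}{k}\,{}_2F_1\!\left(\genfrac{}{}{0pt}{}{\frac{k-i+1}{2},\frac{k-i}{2}}{k+2};4\right) = h(i+1,2k+1),
\]
which, as noted in the text below~\eqref{eq:h-2k}, is the number of Motzkin paths from $(0,0)$ to $(i,k)$; denote this number by $T(i,k)$. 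Likewise, using the hypergeometric representation of the Motzkin numbers from Section~\ref{sec.Motz}, the right-hand side of~\eqref{okinawa} equals $M_{i+j}=T(i+j,0)$. Hence the lemma is equivalent to the combinatorial identity
\[
  \sum_{k\geq 0} T(i,k)\,T(j,k) = M_{i+j},
\]
where the terms with $k>\min(i,j)$ vanish, consistently with $\binom{i}{k}=0$.

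To prove this identity I would argue bijectively. Given a Motzkin path $P$ from $(0,0)$ to $(i+j,0)$, let $k\geq 0$ be the height of $P$ above the abscissa~$i$ and split $P$ there into a prefix $P_1$ from $(0,0)$ to $(i,k)$ and a suffix $P_2$ from $(i,k)$ to $(i+j,0)$. The prefix $P_1$ is an arbitrary path counted by $T(i,k)$. Reading the steps of $P_2$ in reverse order (which turns each $U$ step into a $D$ step, each $D$ into a $U$, and fixes each $H$) and translating the result to start at the origin produces a path of length $j$ from $(0,0)$ to $(j,k)$ which visits the same multiset of heights as $P_2$ and therefore again stays weakly above the axis; this reversal is an involution, so $P_2$ is in bijection with the paths counted by $T(j,k)$. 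Summing over the possible middle height~$k$ yields $M_{i+j}=\sum_{k\geq 0}T(i,k)\,T(j,k)$, as desired.

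The combinatorial core is short, and the step that needs the most care is the hypergeometric-to-lattice-path dictionary, i.e.\ verifying that the shifted ${}_2F_1$ in~\eqref{okinawa} is exactly $h(i+1,2k+1)$ and hence counts Motzkin paths to $(i,k)$; this is only a matching of parameters against~\eqref{eq:h-2k-1}, but it is the place where a slip would most easily occur. If instead one wants a proof in the algorithmic spirit of this paper, one can treat the left-hand side of~\eqref{okinawa} as a function of $n:=i+j$ and of the summation index~$k$: closure properties give an annihilating ideal for the product of the two ${}_2F_1$ factors, and creative telescoping with respect to~$k$ then yields a recurrence in~$n$ for the sum, which one checks is satisfied by~$M_n$ after comparing a few initial values. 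The obstacle on that route is that each ${}_2F_1$ factor depends on~$k$ through parameters that are affine in~$k$ with denominator~$2$, so the summand is not a hypergeometric term in~$k$ and one must use the full holonomic systems machinery while tracking natural boundaries in~$k$, exactly as in Sections~\ref{sec.Motz}--\ref{sec.Nara}.
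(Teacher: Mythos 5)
Your proof is correct, but it takes a genuinely different route from the paper: the paper's entire ``proof'' of this lemma is a citation of Lemma~5.2 of the $q$-Catalan Hankel determinant paper by Ishikawa, Tagawa, and Zeng, where the identity is established as an addition formula for ${}_2F_1$. Your translation is accurate: the product $\binom{i}{k}{}_2F_1\bigl(\frac{k-i+1}{2},\frac{k-i}{2};k+2;4\bigr)$ is exactly $h(i+1,2k+1)$ from \eqref{eq:h-2k-1} (the numerator parameters of ${}_2F_1$ are interchangeable), the right-hand side is $M_{i+j}$ by the hypergeometric formula for $M_n$ in Section~\ref{sec.Motz}, the terms with $k>\min(i,j)$ vanish on both the hypergeometric and the combinatorial side, and your cut-at-abscissa-$i$ plus reverse-and-flip bijection is the standard (and correct) proof of $\sum_{k\geq0}T(i,k)T(j,k)=M_{i+j}$ for the Motzkin triangle $T(n,k)$. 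The one ingredient you take on faith is the assertion, stated but not proved in this paper, that $h(i,2k-1)$ counts Motzkin paths from $(0,0)$ to $(i-1,k-1)$; a fully self-contained version of your argument would verify that the hypergeometric expression in \eqref{eq:h-2k-1} satisfies the Motzkin-triangle recurrence $T(n,k)=T(n-1,k-1)+T(n-1,k)+T(n-1,k+1)$ with the correct boundary values (a finite, routine check, e.g.\ by contiguous relations or by the holonomic machinery used elsewhere in the paper). In exchange, your proof is self-contained modulo that check and explains combinatorially \emph{why} the identity holds, whereas the cited proof is analytic and gives no path interpretation; your closing remark about the creative-telescoping alternative, and the caveat that the summand is holonomic but not a single hypergeometric term in $k$, is also accurate.
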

\begin{proof}
The proof can be found in~\cite[Lemma 5.2]{IshikawaTagawaZeng09}.
\end{proof}
%
%
%
%
%
%
\begin{lemma}
\label{th:msf}
For $n\in\N$ let $T=(t_{i,j})_{1\le i\le 2n,\, j\geq1}$ be an $n$-rowed matrix,
and let $A=(a_{i,j})_{i,j\geq1}$ be a skew-symmetric matrix with infinitely many rows and columns,
i.e. $a_{j,i}=-a_{i,j}$ for $i,j\geq1$.
Then we have
\begin{align}
\label{eq_msf}
\sum_{\genfrac{}{}{0pt}{}{I}{\sharp I=2n}}
\Pf(A^{I}_I) \det(T_I)&
=\Pf(Q),
\end{align}
where the sum on the left-hand side runs over all $2n$-element sets of positive integers
and the skew-symmetric matrix~$Q$ is defined by $Q=(Q_{i,j})=TA\,T^\mathrm{T}$ 
whose entries may be written in the form
\begin{equation}
\label{pf_msf}
Q_{i,j}=\sum_{1\le k<l} a_{k,l} \det(T^{i,j}_{k,l}),
\qquad(1\le i,j\le n).
\end{equation}
\end{lemma}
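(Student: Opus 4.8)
The plan is to give an exterior-algebra proof, which is the slickest way to package the Cauchy--Binet-type bookkeeping underlying such minor summation formulas. Let $V$ be a vector space with basis $\{e_j\}_{j\geq1}$ and let $U=\R^{2n}$ with basis $u_1,\dots,u_{2n}$. To any skew-symmetric matrix one associates a $2$-vector: for $A$ put $\omega_A=\sum_{k<l}a_{k,l}\,e_k\wedge e_l\in\Lambda^2 V$, and for the $2n\times 2n$ matrix $Q$ put $\omega_Q=\sum_{i<j}Q_{i,j}\,u_i\wedge u_j\in\Lambda^2 U$. The single fact about Pfaffians that I need is the exterior-algebra characterization
\[
  \omega_B^{\wedge n}=n!\sum_{\sharp I=2n}\Pf(B^I_I)\,e_I,
\]
valid for every skew-symmetric $B$, where $I=\{i_1<\dots<i_{2n}\}$ and $e_I=e_{i_1}\wedge\dots\wedge e_{i_{2n}}$; the factor $n!$ accounts for the orderings of the $n$ matched pairs, and the signs produced by sorting each wedge into increasing order are exactly the Pfaffian signs.

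Next I would introduce the linear map $\phi\colon V\to U$ determined by the columns of $T$, namely $\phi(e_j)=\sum_{i=1}^{2n}t_{i,j}\,u_i$, and use that $\phi$ induces an algebra homomorphism $\Lambda\phi\colon\Lambda V\to\Lambda U$. The first key step is to check that $\Lambda\phi$ carries $\omega_A$ to $\omega_Q$: expanding $\phi(e_k)\wedge\phi(e_l)=\sum_{i<j}(t_{i,k}t_{j,l}-t_{j,k}t_{i,l})\,u_i\wedge u_j$ and collecting the coefficient of $u_i\wedge u_j$ reproduces precisely $Q_{i,j}=\sum_{k<l}a_{k,l}\det(T^{i,j}_{k,l})$, which is the expansion~\eqref{pf_msf}; this simultaneously verifies that $Q=TAT^{\mathrm T}$ is skew-symmetric and identifies $\Lambda^2\phi(\omega_A)=\omega_Q$.

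The identity then falls out by applying $\Lambda\phi$ to $\omega_A^{\wedge n}$ in two ways. Since $\Lambda\phi$ is an algebra homomorphism and $\omega_A\in\Lambda^2 V$, we get $\Lambda\phi(\omega_A^{\wedge n})=(\Lambda^2\phi(\omega_A))^{\wedge n}=\omega_Q^{\wedge n}=n!\,\Pf(Q)\,u_1\wedge\dots\wedge u_{2n}$, using the characterization above for $B=Q$ (here $\sharp I=2n$ forces $I=\{1,\dots,2n\}$). On the other hand, applying $\Lambda\phi$ termwise to $\omega_A^{\wedge n}=n!\sum_{\sharp I=2n}\Pf(A^I_I)\,e_I$ and using $\Lambda\phi(e_I)=\phi(e_{i_1})\wedge\dots\wedge\phi(e_{i_{2n}})=\det(T_I)\,u_1\wedge\dots\wedge u_{2n}$ (the wedge of the $2n$ columns of $T$ indexed by $I$ equals the determinant of that square submatrix) yields $n!\bigl(\sum_{\sharp I=2n}\Pf(A^I_I)\det(T_I)\bigr)u_1\wedge\dots\wedge u_{2n}$. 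Comparing the two expressions and cancelling $n!$ gives~\eqref{eq_msf}.

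I expect the main obstacles to be bookkeeping rather than conceptual. First, the index set over which $I$ ranges is infinite, so the left-hand side and the entries of $Q$ are a priori infinite sums; I would therefore either work formally, or assume (as holds in the intended application, where $T$ has finite support in each row) that only finitely many $\det(T_I)$ are nonzero, so that every sum is finite and the manipulations are legitimate. Second, the sign and the combinatorial factor $n!$ in the Pfaffian--wedge dictionary must be pinned down carefully, checking that reordering a product of $n$ two-element wedges is an even permutation and that sorting $e_{i_1}\wedge\dots\wedge e_{i_{2n}}$ into increasing order contributes exactly the sign appearing in the definition of $\Pf(A^I_I)$. A fully combinatorial alternative would instead expand $\Pf(Q)$ directly by its definition, substitute~\eqref{pf_msf}, and regroup the resulting sum according to the multiset of column indices used, the terms with a repeated column vanishing by antisymmetry; this avoids exterior algebra but makes the sign cancellations the central difficulty, which is exactly what the homomorphism $\Lambda\phi$ handles automatically.
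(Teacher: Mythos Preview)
Your exterior-algebra argument is correct: the key identity $\omega_B^{\wedge n}=n!\sum_{\sharp I=2n}\Pf(B^I_I)\,e_I$ is standard, the verification that $\Lambda^2\phi(\omega_A)=\omega_Q$ is a straightforward coefficient comparison reproducing~\eqref{pf_msf}, and the two evaluations of $\Lambda\phi(\omega_A^{\wedge n})$ then yield~\eqref{eq_msf} immediately. Your caveats about finiteness and about the $n!$/sign bookkeeping are the right ones, and you handle them appropriately.

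As for comparison with the paper: there is essentially nothing to compare. The paper does not prove Lemma~\ref{th:msf} at all; it simply quotes the result from \cite[Theorem~3.2]{IshikawaWakayama10}. Your proposal therefore goes well beyond what the paper supplies. The exterior-algebra packaging you chose is the slick modern route; the original proof in the cited reference proceeds more combinatorially, expanding $\Pf(Q)$ over matchings and regrouping---precisely the alternative you sketch in your final paragraph. Both arrive at the same place; your approach trades explicit sign-chasing for the functoriality of $\Lambda\phi$, which is exactly the advantage you identify.
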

\begin{proof}
The proof of this minor summation formula can be found in~\cite[Theorem~3.2]{IshikawaWakayama10}.
\end{proof}
\begin{proof}[of Theorem~\ref{th:main-application}]
Set 
\[
  a_{i,j}=\begin{cases}
   1  & \text{if $i=2k-1$ and $j=2k$ for some $k\in\N$,}\\
   -1 & \text{if $i=2k$ and $j=2k-1$ for some $k\in\N$,}\\
   0  & \text{otherwise.}
  \end{cases}
\]
and $t_{i,j}=h(i,j)$ in~\eqref{eq_msf}
where $h(i,j)$ is as defined in~\eqref{eq:h-2k-1} and~\eqref{eq:h-2k}.
Then one can show by direct calculation $Q_{i,j}$ in~\eqref{pf_msf} is given by
\begin{align*}
Q_{i,j}&=\sum_{k\geq1}
\det\begin{pmatrix}h(i,2k-1)&h(i,2k)\\h(j,2k-1)&h(j,2k)\end{pmatrix}\\
& = (j-1)\sum_{k\geq1}\left(\!\genfrac{}{}{0pt}{}{i-1}{k-1}\!\right)\left(\!\genfrac{}{}{0pt}{}{j-2}{k-1}\!\right)
{}_2F_1\!\left(\genfrac{}{}{0pt}{}{\frac{k-i}{2},\frac{k-i+1}{2}}{k+1};4\right)\\
& \qquad\qquad\qquad\times {}_2F_1\!\left(\genfrac{}{}{0pt}{}{\frac{k-j+1}{2},\frac{k-j+2}{2}}{k+1};4\right)-{}\\
& \quad\,\, (i-1)\sum_{k\geq1}\left(\!\genfrac{}{}{0pt}{}{i-2}{k-1}\!\right)\left(\!\genfrac{}{}{0pt}{}{j-1}{k-1}\!\right)
{}_2F_1\!\left(\genfrac{}{}{0pt}{}{\frac{k-i+1}{2},\frac{k-i+2}{2}}{k+1};4\right)\\
& \qquad\qquad\qquad\times{}_2F_1\!\left(\genfrac{}{}{0pt}{}{\frac{k-j}{2},\frac{k-j+1}{2}}{k+1};4\right)
\end{align*}
By \eqref{okinawa} we obtain
\begin{align*}
Q_{i,j}=(j-i)
{}_2F_1\!\left(\genfrac{}{}{0pt}{}{\frac{3-i-j}{2},\frac{4-i-j}{2}}{2};4\right)
=(j-i)M_{i+j-3}
\end{align*}
which, using~\eqref{eq_msf}, gives
\[
\sum_{\genfrac{}{}{0pt}{}{I}{\#I=2n}}\Pf\!\big(A^I_I\big)\, \det H(2n)_{I} = \Pf\!\big((j-i)M_{i+j-3}\big)_{1\leq i,j\leq2n}.
\]
It is not hard to see that $\Pf(A^I_I)=1$ if $I=I_{2n}(\lambda)$ for a partition $\lambda$
such that $\ell(\lambda)\leq2n$ and $\lambda$, $\lambda'$ are even,
and $\Pf(A^I_I)=0$ otherwise (see \cite{IshikawaWakayama10}).
Hence we obtain the desired formula \eqref{eq:application} as a consequence of Theorem~\ref{thm.pfMotz}.
\end{proof}
We can regard the numbers 
\[
  h(i,2k-1)=\left(\!\genfrac{}{}{0pt}{}{i-1}{k-1}\!\right)\,
  {}_{2}F_{1}\!\left(\genfrac{}{}{0pt}{}{\frac{k-i}2,\frac{k-i+1}2}{k+1};4\right)
\]
as a generalization of the Motzkin numbers $M_n$
since they count the Motzkin paths from $(0,0)$ to $(i-1,k-1)$,
and write $\mathcal{M}^{(k)}_{i}=h(i,2k-1)$ hereafter.
In fact $\big(\mathcal{M}^{(k)}_{i}\big)_{i\geq1}$ gives 
the $(k-1)$-th column of the Motzkin triangle~\cite{DonagheyShapiro77}.
Note that $M_n=\mathcal{M}^{(1)}_{n+1}$ so that
Theorem~\ref{thm.pfMotz} reads 
\[
  \Pf\!\big((j-i)\mathcal{M}^{(1)}_{i+j-2}\big) = \prod_{k=0}^{n-1}(4k+1).
\]
It may now be attractive to present a generalization of Theorem~\ref{thm.pfMotz} as follows.
\begin{conjecture}\label{conj.gen}
Let $n$ and $k$ be positive integers. 
\begin{enumerate}
\item[(i)]
Then the Pfaffian
\[
  \Pf\!\left((j-i)\mathcal{M}^{(k)}_{i+j-2}\right)_{1\leq i,j\leq2n}
\]
equals
\[
  \prod_{i=0}^{m-1}\prod_{j=0}^{k-1}(4ki+2j+k)
\]
if $m=n/k$ is an integer, and it equals 
\[
  \Bigg(\prod_{j=1}^{\lfloor k/2\rfloor}\frac{1}{2j-k}\Bigg)\Bigg(\prod_{i=0}^{m-1}\prod_{j=1}^{k}(4ki+2j-k)\Bigg)
\]
if $k$ is odd and $m=\big(n+\lfloor k/2\rfloor\big)\big/k$ is an integer. 
The Pfaffian is zero in all other cases.
\item[(ii)]
Meanwhile, the Pfaffian
\[
  \Pf\!\left((j-i)\left(\mathcal{M}^{(k)}_{i+j-2}+\mathcal{M}^{(k)}_{i+j-1}\right)\right)_{1\leq i,j\leq2n}
\]
equals
\[
  \prod_{i=0}^{m-1}\prod_{j=0}^{k-1}(4ki+2j+k+1)
\]
if $m=n/k$ is an integer, and it equals 
\[
  \Bigg(\prod_{j=1}^{k/2}\frac{1}{2j-k-1}\Bigg)\Bigg(\prod_{i=0}^{m-1}\prod_{j=1}^{k}(4ki+2j-k-1)\Bigg)
\]
if $k$ is even and $m=\big(n+k/2\big)\big/k$ is an integer. 
The Pfaffian is zero in all other cases.
\end{enumerate}
\end{conjecture}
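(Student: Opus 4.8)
The natural line of attack is to run, for general~$k$, the same Pfaffian holonomic ansatz of Section~\ref{sec.pfaffians} that proved Theorem~\ref{thm.pfMotz}. From the ${}_2F_1$-representation given above one checks that $\mathcal{M}^{(k)}_m$ is a holonomic function of the two discrete variables $m$ and~$k$; hence in part~(i) the matrix entries $a_{i,j}=(j-i)\mathcal{M}^{(k)}_{i+j-2}$, and in part~(ii) the entries $a_{i,j}=(j-i)\big(\mathcal{M}^{(k)}_{i+j-2}+\mathcal{M}^{(k)}_{i+j-1}\big)$, form a holonomic sequence in $(i,j,k)$. One then guesses, with the \texttt{Guess} package~\cite{Kauers09}, an implicit holonomic description of the auxiliary sequence $c^{(k)}_{2n,i}$ in the three variables $(n,i,k)$, together with enough initial values to characterize it, and verifies Identities~\eqref{eq.1}--\eqref{eq.3} exactly as in Sections~\ref{sec.implicit}--\ref{sec.eq3}: \texttt{DFiniteTimes} supplies a holonomic description of the summand, and creative telescoping (within \texttt{HolonomicFunctions}~\cite{Koutschan10b}) returns recurrences for the left-hand sides of~\eqref{eq.2} and~\eqref{eq.3} whose correctness is then checked on finitely many initial values.

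The essential new phenomenon is that the Pfaffian now vanishes for most~$n$: in part~(i) it is nonzero only when $k\mid n$ (or, for odd~$k$, when $k\mid n+\lfloor k/2\rfloor$), so its support is a union of at most two arithmetic progressions modulo~$k$. Consequently the ratio $b_{2n}/b_{2n-2}$ occurring in~\eqref{eq.3} is not a well-behaved sequence --- worse, when $b_{2n-2}=\Pf A(2n-1,2n)=0$ the auxiliary function $c^{(k)}_{2n,i}=\Gamma_{i,2n}/\Gamma_{2n-1,2n}$ is not even defined, so the ansatz cannot be applied literally. The remedy is a \emph{$k$-step} version, in the spirit of the double-step method of~\cite{KoutschanThanatipanonda12}: instead of relating $b_{2n}$ to $b_{2n-2}$ one relates $b_{2n}$ to $b_{2(n-k)}$ by iterating the cofactor expansion (or by choosing a non-singular normalizing minor of corank~$2k$), which introduces a $k$-step analogue of~$c$ and a coefficient $\rho_n$ with $b_{2n}=\rho_n\,b_{2(n-k)}$. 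Proving the identities for this enlarged auxiliary object is again a creative-telescoping task; one then checks that $\rho_n$ vanishes off the admissible progressions and is a hypergeometric term in $m=n/k$ (resp. $m=(n+\lfloor k/2\rfloor)/k$) on them, so that the claimed double products arise by telescoping.

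It remains to establish the vanishing in all other cases. I would do this by exhibiting, for each~$n$ outside the admissible set, an explicit nontrivial linear dependence among the rows (equivalently columns) of the matrix, which makes it singular and hence its Pfaffian zero; such a dependence should be forced by the recurrence in~$m$ satisfied by $\mathcal{M}^{(k)}_m$ together with the Hankel-type shift structure of the entries. Alternatively, the vanishing can be read off from the minor summation formula of Lemma~\ref{th:msf}: choosing the skew-symmetric array~$A$ so that $\Pf(A^I_I)\neq0$ only for index sets~$I$ of a prescribed combinatorial shape, one shows that no such~$I\subseteq\{1,\dots,2n\}$ exists unless $2n$ meets the right progression. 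Identifying precisely which shapes occur --- i.e., which partitions~$\lambda$ yield admissible~$I_{2n}(\lambda)$ --- carries the genuine combinatorial content and is, together with the practical difficulty of guessing a trivariate holonomic $c^{(k)}_{2n,i}$ for symbolic~$k$ (which may well require carrying out the computation separately for $k=2,3,4,5$ and then interpolating or arguing uniformly), the step I expect to be the main obstacle.
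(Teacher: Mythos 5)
The statement you are asked to prove is labeled a \emph{conjecture} in the paper, and the paper offers no proof of it: the authors themselves remark that for $k=1$ part~(i) reduces to Theorem~\ref{thm.pfMotz} and part~(ii) is analogous, but that for $k\geq2$ the periodic vanishing of the Pfaffians blocks their method, since it hinges on the quotient $b_{2n}/b_{2n-2}$ of two consecutive Pfaffians. Your proposal correctly identifies exactly this obstruction, but it does not overcome it. What you submit is a research program, not a proof: the ``$k$-step'' analogue of the cofactor expansion is only named, never constructed (in particular you do not say which non-singular minor of corank $2k$ replaces $\Gamma_{2n-1,2n}$ as normalizer, nor why one exists, nor what the analogues of Identities~\eqref{eq.1}--\eqref{eq.3} and of Proposition~\ref{thm.Laplace} look like for it); the claim that the resulting coefficient $\rho_n$ is a hypergeometric term on the admissible progressions is asserted, not derived; and the vanishing ``in all other cases'' is handled by two alternative sketches (``exhibit an explicit row dependence'' or ``read it off from Lemma~\ref{th:msf}''), neither of which is carried out even for $k=2$. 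You yourself flag the trivariate guessing for symbolic~$k$ as the main obstacle, and the paper concurs that the double-step-style construction ``will not work for symbolic~$k$ in general,'' so even a fully executed version of your plan would at best settle the conjecture for finitely many fixed values of~$k$, not for all positive integers~$k$ as the statement requires.

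To be clear about what would be needed: a genuine proof must either (a) find a uniform-in-$k$ argument --- for instance a combinatorial or minor-summation identity expressing the matrix $\big((j-i)\mathcal{M}^{(k)}_{i+j-2}\big)$ as a product $TAT^{\mathrm{T}}$ as in Lemma~\ref{th:msf} with a tractable right-hand side, which would simultaneously explain the vanishing pattern and the double-product evaluation --- or (b) carry out the $k$-step holonomic computation completely for each fixed $k$ and then supply a separate argument covering all $k$ at once. Your proposal gestures at both routes but completes neither, so the statement remains, as in the paper, a conjecture.
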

We want to remark that for $k=1$ part (i) is just
Theorem~\ref{thm.pfMotz} and part (ii) can be proven
analogously. Unfortunately these two Pfaffians are periodically zero
if $k\geq2$. This prevents us from applying our method to the
conjecture, since we consider the quotient of two consecutive
Pfaffians. Of course, one could come up with a Pfaffian analogue of
the double-step method presented in~\cite{KoutschanThanatipanonda12},
which would settle Conjecture~\ref{conj.gen} for the special case
$k=2$.  This construction may be extended for $k=3$, $k=4$, etc., at
the cost of more and more involved computations. However, this
approach will not work for symbolic~$k$ in general.

Conjecture~\ref{conj.gen} can be regarded as a Pfaffian analogue of
the Hankel determinants of Motzkin numers \cite[Proposition~2]{Aigner98},
and Hankel determinants of sums of two consecutive Motzkin numbers \cite[Theorem~3.2]{CameronYip11}.
Many combinatorial arguments are known for the Hankel determinants,
but little is known for Hankel Pfaffians (see \cite{IshikawaTagawaZeng10,Lascoux11}).
It may be interesting to discover a combinatorial reason 
why we can expect such a nice formula for the Hankel Pfaffians of (sums of) Motzkin numbers.

\section{Acknowledgments} The authors are grateful to Jiang Zeng
for initiating their first contact during the international conference
on asymptotics and special functions in Hongkong, and to the anonymous
referees for their diligent work.  The second named author was
employed by the Research Institute for Symbolic Computation (RISC) of
the Johannes Kepler University in Linz, Austria, while carrying out
the research for the present paper.


\begin{thebibliography}{10}

\bibitem{Aigner98}
Martin Aigner.
\newblock {M}otzkin numbers.
\newblock {\em European Journal Combinatorics}, 19:663--675, 1998.

\bibitem{CameronYip11}
Naiomi~T. Cameron and Andrew~C.M. Yip.
\newblock {H}ankel determinants of sums of consecutive {M}otzkin numbers.
\newblock {\em Linear Algebra and its Applications}, 434:712--722, 2011.

\bibitem{Chyzak00}
Fr\'{e}d\'{e}ric Chyzak.
\newblock An extension of {Z}eilberger's fast algorithm to general holonomic
  functions.
\newblock {\em Discrete Mathematics}, 217(1-3):115--134, 2000.

\bibitem{DonagheyShapiro77}
Robert Donaghey and Louis~W. Shapiro.
\newblock Motzkin numbers.
\newblock {\em Journal of Combinatorial Theory, Series A}, 23:291--301, 1977.

\bibitem{IshikawaTagawaZeng09}
Masao Ishikawa, Hiroyuki Tagawa, and Jiang Zeng.
\newblock A $q$-analogue of {C}atalan {H}ankel determinants.
\newblock {\em RIMS K\^oky\^uroku Bessatsu}, B11:19--42, 2009.
\newblock arXiv:1009.2004.

\bibitem{IshikawaTagawaZeng10}
Masao Ishikawa, Hiroyuki Tagawa, and Jiang Zeng.
\newblock {P}faffian decomposition and a {P}faffian analogue of $q$-{C}atalan
  {H}ankel determinants.
\newblock Technical Report 1011.5941, arXiv, 2010.

\bibitem{IshikawaWakayama10}
Masao Ishikawa and Masato Wakayama.
\newblock Applications of the minor summation formula {III}: {P}l\"{u}cker
  relations, lattice paths and {P}faffians.
\newblock {\em Journal of Combinatorial Theory, Series A}, 113:113--155, 2006.

\bibitem{Kauers09}
Manuel Kauers.
\newblock Guessing handbook.
\newblock Technical Report 09-07, RISC Report Series, Johannes Kepler
  University Linz, 2009.
\newblock http:/$\!$/www.risc.jku.at/\linebreak[0]%
  research/\linebreak[0]combinat/\linebreak[0]%
  software/\linebreak[0]Guess/

\bibitem{Koutschan09}
Christoph Koutschan.
\newblock {\em Advanced Applications of the Holonomic Systems Approach}.
\newblock PhD thesis, RISC, Johannes Kepler University, Linz, Austria, 2009.

\bibitem{Koutschan10c}
Christoph Koutschan.
\newblock A fast approach to creative telescoping.
\newblock {\em Mathematics in Computer Science}, 4(2-3):259--266, 2010.

\bibitem{Koutschan10b}
Christoph Koutschan.
\newblock {HolonomicFunctions (User's Guide)}.
\newblock Technical Report 10-01, RISC Report Series, Johannes Kepler
  University Linz, 2010.
\newblock http:/$\!$/www.risc.jku.at/\linebreak[0]%
  research/\linebreak[0]combinat/\linebreak[0]%
  software/\linebreak[0]HolonomicFunctions/

\bibitem{KoutschanKauersZeilberger10}
Christoph Koutschan, Manuel Kauers, and Doron Zeilberger.
\newblock Proof of {G}eorge {A}ndrews's and {D}avid {R}obbins's $q$-{TSPP}
  conjecture.
\newblock {\em Proceedings of the US National Academy of Sciences},
  108(6):2196--2199, 2011.

\bibitem{KoutschanThanatipanonda12}
Christoph Koutschan and Thotsaporn Thanatipanonda.
\newblock Advanced computer algebra for determinants.
\newblock {\em Annals of Combinatorics}, 2012. To appear, preprint in arXiv:1112.0647.

\bibitem{Krattenthaler99}
Christian Krattenthaler.
\newblock Advanced determinant calculus.
\newblock {\em S\'{e}minaire Lothar\-ingien de Combinatoire}, 42:1--67, 1999.
\newblock Article B42q.

\bibitem{Lascoux11}
Alain Lascoux.
\newblock {H}ankel {P}faffians, discriminants and {K}azhdan-{L}usztig bases.
\newblock Technical Report 1103.4971, arXiv, 2011.

\bibitem{Zeilberger90}
Doron Zeilberger.
\newblock A holonomic systems approach to special functions identities.
\newblock {\em Journal of Computational and Applied Mathematics},
  32(3):321--368, 1990.

\bibitem{Zeilberger07}
Doron Zeilberger.
\newblock {The HOLONOMIC ANSATZ II. Automatic DISCOVERY(!) and PROOF(!!) of
  Holonomic Determinant Evaluations}.
\newblock {\em Annals of Combinatorics}, 11:241--247, 2007.

\end{thebibliography}

\end{document}